\documentclass[11pt]{article}

\usepackage[T1]{fontenc}
\usepackage{lmodern}
\usepackage{amsmath,amssymb,amsthm}
\usepackage[margin=1in]{geometry}
\usepackage[hidelinks]{hyperref}

\hypersetup{
  pdftitle={A linear gap for the maximal cp-rank},
  pdfauthor={Yair Lavi}
}

\numberwithin{equation}{section}

\newtheorem{theorem}{Theorem}[section]
\newtheorem{lemma}[theorem]{Lemma}
\newtheorem{proposition}[theorem]{Proposition}

\title{A linear gap for the maximal cp-rank}
\author{Yair Lavi}
\date{28 September 2026}

\begin{document}
\maketitle

\begin{abstract}

We prove that the maximal cp-rank $p_n$ satisfies $p_n\ge n(n-3)/2$ for odd $n\ge5$ and $p_n\ge n(n-3)/2-1$ for even $n\ge6$. Together with the known upper bound $p_n\le\binom{n+1}{2}-4$, this gives $p_n=n^2/2+O(n)$.
\end{abstract}

\noindent\textbf{Keywords.} Completely positive matrix; cp-rank; copositive matrix; exposed face; cyclic incidence matrix.

\noindent\textbf{2020 Mathematics Subject Classification.} 15B48, 15A23, 05B20.

\section{Introduction}

A real symmetric $n\times n$ matrix $A$ is \textbf{completely positive} if
$A=BB^{\mathsf T}$ for an entrywise nonnegative real matrix $B$ with finitely
many columns. Its \textbf{cp-rank} $\operatorname{cpr}(A)$ is the least possible
number of columns of a matrix $B$ in such a representation; set
$\operatorname{cpr}(0)=0$. Write $\operatorname{CP}_n$ for the cone of these
matrices and

\begin{equation}
p_n=\max\{\operatorname{cpr}(A):A\in\operatorname{CP}_n\}.
\end{equation}

Write $\mathbb R_+^n=\{x\in\mathbb R^n:x_i\ge0\text{ for every }i\}$;
thus $x\ge0$ means that every coordinate of $x$ is nonnegative. A
real symmetric $n\times n$ matrix $S$ is \textbf{copositive} if
$x^{\mathsf T}Sx\ge0$ for every $x\in\mathbb R_+^n$.

Drew, Johnson, and Loewy conjectured $p_n=\lfloor n^2/4\rfloor$
for $n\ge4$ \cite{ref1}. Bomze, Schachinger, and Ullrich disproved this at
orders $7$--$11$ \cite{ref5} and at every order $n\ge12$ \cite{ref3}. The general
upper bound $p_n\le\binom{n+1}{2}-4$ for $n\ge5$ is \cite[Corollary~5.2]{ref2}.
Relative to this upper bound, the lower bounds of \cite{ref3} and
\cite[Theorem~7.4]{ref4} leave gaps of $O(n^{3/2})$ and
$O(n\log\log n)$, respectively.

We follow \cite[Sections~2--3]{ref5} and \cite[Lemmas~2.1--2.2 and Example~3.1]{ref3}
in using copositive matrices with nonzero vectors $u\ge0$ satisfying
$u^{\mathsf T}Su=0$ and in deleting the same coordinates from rows and
columns. Let $\widehat S$ be the order-seven matrix of \cite[Example~1]{ref5}.
With zero-based indices modulo $7$, the choice
$t=4\pi-7\arccos(-1/6)$ in our family gives
$S_{ij}=\widehat S_{2i,2j}/162$. Here we give
a uniform family in every odd order, prove its copositivity, classify
all such vectors up to positive scaling, and obtain explicit lower
bounds in both parities without finite-order computation.

\begin{theorem}
For every integer $n\ge5$, put
$\varepsilon_n=0$ if $n$ is odd and $\varepsilon_n=1$ if $n$ is even. Then

\begin{equation}
\boxed{\frac{n(n-3)}2-\varepsilon_n
\ \le\ p_n\ \le\ \binom{n+1}{2}-4.}
\end{equation}
\end{theorem}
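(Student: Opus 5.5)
The upper bound is \cite[Corollary~5.2]{ref2}, so only the lower bound needs proof. I will use the copositive-certificate method of \cite[Sections~2--3]{ref5} and \cite[Lemmas~2.1--2.2]{ref3}. Suppose $S$ is copositive and $A=\sum_k u_ku_k^{\mathsf T}$ with $u_k\ge0$ and $u_k^{\mathsf T}Su_k=0$. Then any cp-factorization $A=\sum_j b_jb_j^{\mathsf T}$ with $b_j\ge0$ gives $0=\langle S,A\rangle=\sum_j b_j^{\mathsf T}Sb_j$. Hence every $b_j$ is a zero of $S$. If the zero set of $S$ in $\mathbb R_+^n$ is a finite union of rays $\mathbb R_+v_1,\dots,\mathbb R_+v_m$, then every factorization uses only multiples of the $v_i$. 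If moreover the matrices $v_iv_i^{\mathsf T}$ are linearly independent, the factorization is unique, and $\operatorname{cpr}\bigl(\sum_i v_iv_i^{\mathsf T}\bigr)=m$. So for odd $n$ the goal is a copositive $S$ whose zeros in $\mathbb R_+^n$ form exactly $n(n-3)/2$ rays with linearly independent outer products.

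For odd $n=2k+1$, I would build a circulant family $S(t)$ indexed by $\mathbb Z_n$. The entries would be cosines, e.g.\ $S_{ij}=\cos\bigl(\theta_{|i-j|}(t)\bigr)$, modeled on the Hildebrand/Horn-type extremal copositive matrices, and at $n=7$ with $t=4\pi-7\arccos(-1/6)$ it should reduce to $\widehat S$. The zeros would be supported on cyclic intervals of the form $\{i,i+1,\dots,i+k\}$ or similar. Each support $I$ would carry a single zero $v_I$ obtained from the kernel of the principal submatrix $S_I$, which should be positive semidefinite of corank one with a positive kernel vector. The count $n(n-3)/2$ is the number of chords of an $n$-gon, which suggests indexing the zeros by pairs $\{i,j\}$ with $j\ne i,i\pm1$, or $n$ rotation classes of $(n-3)/2$ zeros each.

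The key steps, in order, are as follows.
\begin{enumerate}
\item Define $S(t)$ and exhibit the explicit zeros $v_{ij}$; checking $v_{ij}^{\mathsf T}Sv_{ij}=0$ is a trigonometric identity.
\item Prove that $S(t)$ is copositive.
\item Classify all zeros. By the standard facts, the support $I$ of a minimal zero satisfies $S_I\succeq0$ and $(Su)_i\ge0$ off $I$. I would show that this forces $I$ to be one of the prescribed sets and the zero to be unique up to scaling on it.
\item Prove that the $n(n-3)/2$ matrices $v_{ij}v_{ij}^{\mathsf T}$ are linearly independent. I would do this by evaluating them on suitable coordinate pairs $(a,b)$ to get a triangular pattern, using the interval supports.
\end{enumerate}
I expect step 2, and the zero classification tied to it, to be the main obstacle. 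My first attempt would be induction or a reduction: for $x\ge0$, restrict to the support and use that $S$ is positive semidefinite on every proper cyclic interval of length at most $k+1$. For supports containing two non-adjacent gaps, I would use a Schur-complement/pivoting argument (Gaddum style) that removes a coordinate and lowers the order while preserving the cosine structure.

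For even $n$, I would take the odd-order matrix of order $n+1$ and delete one coordinate from rows and columns, following \cite[Lemma~2.2]{ref3}. A principal submatrix of a copositive matrix is copositive, and its zeros are exactly the zeros of $S$ whose support avoids the deleted index. So I need to count the zeros of order $n+1$ supported off a fixed index and check that there are $n(n-3)/2-1$ of them. They remain linearly independent, being a subset, and the same uniqueness argument gives $p_n\ge n(n-3)/2-1$. If the direct count comes out differently, the fallback is the trivial monotonicity $p_n\ge p_{n-1}+1$, obtained by taking a direct sum with a $1\times1$ block. This gives $(n-1)(n-4)/2+1$, which is weaker, so the deletion count is the route to pursue.
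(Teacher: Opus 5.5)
\textbf{The framework you set up fails for the family you point to.} Your principle (finitely many zero rays with \emph{linearly independent} outer products, cp-rank equal to the number of rays) does not fit that family, and your structural predictions for it are off. The family that works is $S_{ij}=\cos\phi_{d(i,j)}$ with $\phi_k=ak-t$ and $a=(2\pi+3t)/n$; at $n=7$ it is $\widehat S$ up to scaling and the permutation $i\mapsto 2i$. Its zeros are not on cyclic intervals. On any set inside $m=(n-1)/2$ cyclic steps, $S$ exceeds the positive semidefinite Gram matrix $\cos(a(r_i-r_j))$ off the diagonal, so $x^{\mathsf T}Sx>0$ for every nonzero $x\ge0$ supported there. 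The zeros instead sit on the surrounding triples, those with all three cyclic gaps at most $m$. There the choice of $a$ makes the three pair angles sum to $2\pi$, so $S_T$ is a rank-two planar Gram matrix with a positive kernel vector.

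There are $n(n^2-1)/24$ such triples, which equals $n(n-3)/2$ only at $n=5,7$. Already at $n=9$ there are $30$ rays against the bound $27$. So for $n\ge9$ the outer products are dependent, your step~4 fails, and $\sum uu^{\mathsf T}$ need not have cp-rank equal to the number of rays. The even case breaks the same way. At $N=8$ there are $20$ triples of the order-$9$ matrix avoiding a fixed index, against the bound $19$, and ``independent, being a subset'' inherits the false premise.

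\textbf{What is missing.} You need the dimension form of the finite-zero principle and the rank computation it calls for. Conic Carath\'eodory bounds every cp-rank in $F(S)$ by $h=\dim\operatorname{span}\{uu^{\mathsf T}\}$. A generic point of $F(S)$ avoids the finitely many spans of fewer than $h$ generators, so the maximum is exactly $h$. Computing $h$ is where the work lies, in three steps:
\begin{itemize}
\item The identity $(Su)_i=0$ for $u_i>0$ makes the diagonal entries of $uu^{\mathsf T}$ fixed linear combinations of the off-diagonal ones.
\item The off-diagonal block is a diagonal rescaling of the triple--pair incidence matrix $M$.
\item A Fourier decomposition over $\mathbb Z/n$ shows that each of the $n$ modes contributes a one-dimensional kernel, so $\operatorname{rank}M=\binom n2-n$.
\end{itemize}
For even order you need $\operatorname{rank}M'$. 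The paper gets it from the cycle space of the bipartite graph of triples through the deleted vertex. The edge $\{m,m+1\}$, whose column in $M'$ vanishes, is exactly what costs the $-1$.

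\textbf{Steps~1--3 are not yet an argument.} The angles $\theta_k(t)$ are never fixed. Copositivity together with the complete zero list comes from minimizing on the simplex, not from a generic pivoting reduction:
\begin{itemize}
\item A support with no surrounding triple lies in a short interval, where the Gram domination above applies.
\item A support containing a surrounding triple plus a fourth vertex contains two surrounding triples sharing a pair. Their rays $u,v$ satisfy $u^{\mathsf T}Sv>0$, so $y=u-v$ has $y^{\mathsf T}Sy<0$, which contradicts minimality.
\end{itemize}
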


Thus $p_n=\tfrac12n^2+O(n)$. The difference between the upper and lower
expressions in (1.2) is $2n-4+\varepsilon_n$. The construction gives
the lower expression on a particular subset of $\operatorname{CP}_n$;
(1.2) does not assert an exact value of $p_n$.

At $n=5$, our bound $5$ compares with $p_5=6$ \cite[Remark~5.1]{ref2}.
At $n=6$, our bound $8$ compares with $9\le p_6\le15$
\cite[Table~2]{ref3} and \cite[Theorem~6.1]{ref2}. At $n=7,13$ our bounds
match published lower bounds $14,65$ \cite[Example~1]{ref5} and \cite[Table~1]{ref3};
at $n=14,15$ they give $76,90$, below $80,95$ \cite[Table~1]{ref3}.
For $n=8,\ldots,12$, our values $19,27,34,44,53$ exceed the
published $18,26,30,37,50$ \cite[Examples~2--3]{ref5}, \cite[Table~1]{ref3}, and \cite[Table~1]{ref4}.

The proof classifies the finitely many nonnegative vectors $u\ne0$ with
$u^{\mathsf T}Su=0$ for a suitable copositive $S$, up to positive scaling.
Every nonzero factor column of a completely positive matrix orthogonal
to $S$ is a positive multiple of one of them. Lemma 2.1 shows that the
dimension of the span of their outer products is the maximum cp-rank
among these matrices; Sections 3--5 compute it in odd and even order.

\section{Finite zero sets and cp-rank}

Let $S$ be any real symmetric $n\times n$ matrix. A nonzero vector
$u\in\mathbb R_+^n$ with $u^{\mathsf T}Su=0$ is a \textbf{nonnegative zero} of
$S$. Its \textbf{support} is $\operatorname{supp}(u)=\{i:u_i>0\}$. The ray
generated by a nonnegative zero $u$ is $\{cu:c\ge0\}$; two nonnegative
zeros determine the same ray exactly when one is a positive multiple of
the other. We call such a ray a \textbf{zero ray} of $S$.

On the space of real symmetric $n\times n$ matrices, we use the
\textbf{Frobenius inner product}
$\langle S,A\rangle=\operatorname{tr}(SA)$. An \textbf{exposed face} of a cone
$K$ is its intersection with the zero set of a linear functional that
is nonnegative on $K$. If $S$ is copositive, then for every
$A=BB^{\mathsf T}\in\operatorname{CP}_n$, writing the columns of $B$ as
$b_1,\ldots,b_s$ gives
$\langle S,A\rangle=\sum_{j=1}^s b_j^{\mathsf T}Sb_j\ge0$.
Hence the following set is an exposed face of the
completely positive cone $\operatorname{CP}_n$:

\begin{equation}
F(S)=\{A\in\operatorname{CP}_n:\langle S,A\rangle=0\}.
\end{equation}

The following finite-zero principle appears in
\cite[Lemma~2.1]{ref5} and \cite[Lemmas~2.1--2.2]{ref3}.

\begin{lemma}[finite-zero face]
Suppose a copositive $S$ has exactly
$q<\infty$ nonnegative zero rays, represented by
$u^{(1)},\ldots,u^{(q)}$. If

\begin{equation}
h=\dim\operatorname{span}\{u^{(a)}u^{(a)\mathsf T}:1\le a\le q\},
\end{equation}

then the maximum cp-rank of matrices in $F(S)$ is $h$.

\end{lemma}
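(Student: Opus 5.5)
The plan is to prove the two inequalities separately: every matrix in $F(S)$ has cp-rank at most $h$, and some matrix in $F(S)$ has cp-rank exactly $h$. Throughout, write $V=\operatorname{span}\{u^{(a)}u^{(a)\mathsf T}\}$ and $P_a=u^{(a)}u^{(a)\mathsf T}$.

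\textbf{Structure of $F(S)$.} Take $A=BB^{\mathsf T}\in F(S)$ with columns $b_1,\ldots,b_s\ge0$. Then $0=\langle S,A\rangle=\sum_j b_j^{\mathsf T}Sb_j$. Each term is nonnegative by copositivity, so every term vanishes. Hence each nonzero $b_j$ is a nonnegative zero of $S$, and so $b_j=c_ju^{(a_j)}$ with $c_j>0$. Grouping the columns by ray gives
\[
A=\sum_{a=1}^q \lambda_a P_a,\qquad \lambda_a\ge0 .
\]
Conversely, every such sum lies in $\operatorname{CP}_n$, and it lies in $F(S)$ because each $P_a$ is orthogonal to $S$. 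So $F(S)=\operatorname{cone}\{P_1,\ldots,P_q\}\subseteq V$.

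\textbf{Upper bound.} Given $A=\sum_a\lambda_aP_a$, apply Carathéodory's theorem for cones inside the $h$-dimensional space $V$. This rewrites $A$ as a nonnegative combination of at most $h$ of the $P_a$ whose matrices are linearly independent. Each term $\lambda_aP_a$ equals $(\sqrt{\lambda_a}u^{(a)})(\sqrt{\lambda_a}u^{(a)})^{\mathsf T}$, which supplies one nonnegative column. Therefore $\operatorname{cpr}(A)\le h$.

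\textbf{Lower bound.} Take $A_0=\sum_{a=1}^q P_a$. By the structure step, any factorization $A_0=BB^{\mathsf T}$ has every nonzero column on some ray, so
\[
A_0=\sum_a \mu_a P_a,
\]
where $\mu_a\ge0$ and $\mu_a>0$ only if some column lies on ray $a$. Hence the number of columns is at least $|\{a:\mu_a>0\}|$. It therefore suffices to show that every representation $A_0=\sum\mu_aP_a$ with $\mu\ge0$ has at least $h$ positive coefficients.

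This does not follow from linear algebra alone, since representations with smaller support might exist. Instead I would choose the matrix more cleverly: take $A^*=\sum_{a\in I}P_a$ where $I$ is a set of $h$ indices with $\{P_a\}_{a\in I}$ a basis of $V$. This is still not enough, because another subset might also represent $A^*$ nonnegatively.

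The robust choice is a generic point. For each subset $J$ with $|J|<h$, the cone $\operatorname{cone}\{P_a:a\in J\}$ lies in a subspace of dimension less than $h$. There are finitely many such $J$, so their union is a finite union of proper subspaces of $V$. The cone $F(S)$ spans $V$, so it has nonempty interior relative to $V$. Choose $A$ in that relative interior but outside the finite union. Then no representation of $A$ has fewer than $h$ nonzero coefficients, and $\operatorname{cpr}(A)\ge h$.

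Combining the two bounds gives the maximum exactly $h$. I expect the main obstacle to be this lower bound: it needs the finite-union-of-proper-subspaces argument, and it relies on the finiteness of $q$, which guarantees that only finitely many subsets $J$ occur. The structure step is routine given copositivity.
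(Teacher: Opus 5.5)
Your proposal is correct and follows essentially the same route as the paper. Copositivity forces every factor column onto a zero ray, conic Carath\'eodory in the $h$-dimensional span gives the upper bound, and a point of $F(S)$ avoiding the finitely many spans of fewer than $h$ generators gives the lower bound; you additionally spell out the relative-interior and ray-counting details that the paper leaves implicit.
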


\begin{proof}
If $q=0$, then $F(S)=\{0\}$ and $h=0$. Otherwise, choose a
factorization $A=BB^{\mathsf T}$ for $A\in F(S)$. Copositivity makes
every summand's pairing with $S$ nonnegative, so each nonzero column of
$B$ lies on a listed zero ray. Thus $F(S)$ is generated by their outer
products. The
conic form of Carath\'eodory's theorem says that any point of a cone
spanning an $h$-dimensional space is a nonnegative combination of at
most $h$ generators, so cp-rank is at most $h$. A point of $F(S)$
outside the finitely many spans of fewer than $h$ generators has
cp-rank $h$. Such a point exists because a finite union of proper
linear subspaces cannot cover an $h$-dimensional cone.

\end{proof}

\section{A copositive family in odd order}

Let $n=2m+1\ge5$ and index coordinates by $\mathbb Z/n\mathbb Z$ in
cyclic order. We also regard index $i$ as the vertex at polar angle
$2\pi i/n$ of the unit regular $n$-gon. The \textbf{cyclic distance} $d(i,j)$
is the smaller number of steps along the polygon from $i$ to $j$, so
$1\le d(i,j)\le m$ for $i\ne j$. A \textbf{cyclic interval of $L$ steps} is
$\{i,i+1,\ldots,i+L\}$ modulo $n$, where $0\le L<n$. Fix

\begin{equation}
0<t<\frac{2\pi}{2n-3},\qquad
a=\frac{2\pi+3t}{n},\qquad
\phi_k=ak-t\quad(1\le k\le m),
\end{equation}

and define a real symmetric matrix $S$ by

\begin{equation}
S_{ii}=1,\qquad S_{ij}=\cos\phi_{d(i,j)}\quad(i\ne j).
\end{equation}

At $n=5$, the matrices in (3.2) belong, up to a permutation of
coordinates, to Hildebrand's order-five copositive family \cite{ref6}.

This matrix is \textbf{circulant}: its entries satisfy
$S_{i+h,j+h}=S_{ij}$ for every $h\in\mathbb Z/n\mathbb Z$.

The interval in (3.1) gives $0<\phi_1<\cdots<\phi_m<\pi$.
Indeed $\phi_1=(2\pi-(n-3)t)/n>0$ and
$\phi_m=\pi-\pi/n+(n-3)t/(2n)<\pi$.
The \textbf{pair angle} of $\{i,j\}$ is $\phi_{d(i,j)}$. For an index set
$I$, the \textbf{principal submatrix} on $I$ is
$S_I=(S_{ij})_{i,j\in I}$.

For three distinct vertices $v_1,v_2,v_3$ in cyclic order, choose
representatives $0\le v_1<v_2<v_3<n$. Their \textbf{cyclic gaps} are
$k=v_2-v_1$, $\ell=v_3-v_2$, and $r=n+v_1-v_3$: the positive numbers
of steps from $v_1$ to $v_2$, from $v_2$ to $v_3$, and from $v_3$ back
to $v_1$. They satisfy $k+\ell+r=n$; starting the cyclic listing at a
different vertex only permutes them cyclically. We call the triple
\textbf{surrounding} if $k,\ell,r\le m$. Then each gap is the cyclic
distance of its endpoint pair, and the three pair angles sum to
$a(k+\ell+r)-3t=an-3t=2\pi$.

For example, let $n=7$, $m=3$, $t=\pi/9$ (which satisfies (3.1)), and
therefore $a=\pi/3$.
The triple $\{0,2,4\}$ has gaps $(k,\ell,r)=(2,2,3)$, so it is
surrounding. Its pair distances are $d(0,2)=d(2,4)=2$ and $d(4,0)=3$;
the pairs $\{0,2\}$ and $\{2,4\}$ each have angle $\phi_2=5\pi/9$,
while $\{0,4\}$ has angle $\phi_3=8\pi/9$. These three angles sum to
$2\pi$.

\begin{lemma}[triangle zeros]
Let $T=\{v_1,v_2,v_3\}$ be a
surrounding triple, with its vertices listed in cyclic order and gaps
$k,\ell,r$ as above. Up to multiplication by a positive scalar, there
is exactly one nonnegative zero $u$ of $S$ with
$\operatorname{supp}(u)=T$. A representative, with all other
coordinates zero, is

\begin{equation}
u_{v_1}=\sin\phi_\ell,\qquad
u_{v_2}=\sin\phi_r,\qquad
u_{v_3}=\sin\phi_k,
\end{equation}

\end{lemma}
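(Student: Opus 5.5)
The plan is to reduce to the $3\times3$ principal submatrix $S_T$ and use its Gram structure. A nonnegative zero $u$ with support $T$ satisfies $u^{\mathsf T}Su=u_T^{\mathsf T}S_Tu_T=0$, where $u_T>0$ is the restriction of $u$ to $T$. Write $\alpha=\phi_k$, $\beta=\phi_\ell$, $\gamma=\phi_r$; these are the pair angles of $\{v_1,v_2\}$, $\{v_2,v_3\}$ and $\{v_3,v_1\}$. Then
\[
S_T=\begin{pmatrix}1&\cos\alpha&\cos\gamma\\ \cos\alpha&1&\cos\beta\\ \cos\gamma&\cos\beta&1\end{pmatrix},
\qquad \alpha+\beta+\gamma=2\pi,\quad \alpha,\beta,\gamma\in(0,\pi),
\]
where the angle sum was established just before the lemma for surrounding triples.

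The key observation is that $S_T$ is the Gram matrix of three unit vectors in $\mathbb R^2$:
\[
w_1=(1,0),\qquad w_2=(\cos\alpha,\sin\alpha),\qquad w_3=(\cos(\alpha+\beta),\sin(\alpha+\beta)).
\]
Indeed $\langle w_1,w_3\rangle=\cos(\alpha+\beta)=\cos(2\pi-\gamma)=\cos\gamma$, and the other two entries are immediate. Hence $S_T$ is positive semidefinite of rank $2$; the rank is not $1$ because $\sin\alpha>0$. Consequently $x^{\mathsf T}S_Tx=\|x_1w_1+x_2w_2+x_3w_3\|^2$ vanishes exactly on the one-dimensional kernel, i.e. on the linear relations among $w_1,w_2,w_3$.

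Next I would identify this kernel. The standard identity for three plane vectors, $\sum_{\text{cyc}}\sin(\theta_j-\theta_i)\,w_k=0$, gives the relation
\[
\sin\beta\,w_1+\sin\gamma\,w_2+\sin\alpha\,w_3=0 .
\]
Here the coefficient of $w_1$ is $\sin(\theta_3-\theta_2)=\sin\beta$. The coefficient of $w_2$ is $\sin(\theta_1-\theta_3)=\sin(-(\alpha+\beta))=\sin\gamma$. The coefficient of $w_3$ is $\sin(\theta_2-\theta_1)=\sin\alpha$. One can also check the relation directly: its second coordinate is $\sin\gamma\sin\alpha+\sin\alpha\sin(\alpha+\beta)=\sin\alpha(\sin\gamma-\sin\gamma)=0$, and the first coordinate is handled by a similar sine-addition computation.

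So the kernel is spanned by $(\sin\phi_\ell,\sin\phi_r,\sin\phi_k)$, which is exactly the vector $u$ stated in the lemma. All three angles lie in $(0,\pi)$, so every entry is strictly positive. Thus $u$ is a nonnegative zero with support exactly $T$. Conversely, any zero with support $T$ has $u_T$ in the one-dimensional kernel, and since $u_T>0$ it is a positive multiple of this vector, which gives uniqueness up to positive scaling.

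The main obstacle is purely computational: verifying the sine relation without sign slips, particularly the role of $\alpha+\beta+\gamma=2\pi$ in turning $-\sin(\alpha+\beta)$ into $+\sin\gamma$. A cleaner check is to note that the vector is the cross product of two rows of $S_T$. For example, the $(2,3)$ cofactor computation gives $\cos\alpha\cos\beta-\cos\gamma=\sin\alpha\sin\beta$, using $\cos\gamma=\cos(\alpha+\beta)$. The remaining entries follow by cyclic symmetry.
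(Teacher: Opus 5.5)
Your proof is correct and follows the paper's argument. Like the paper, you realize $S_T$ as the Gram matrix of planar unit vectors at angles $0,\phi_k,\phi_k+\phi_\ell$, verify the dependence $\sin\phi_\ell w_1+\sin\phi_r w_2+\sin\phi_k w_3=0$ via $\phi_k+\phi_\ell+\phi_r=2\pi$, and conclude from rank two and strictly positive coefficients that this is the unique nonnegative kernel ray. One harmless slip appears only in your optional cross-check: $\cos\alpha\cos\beta-\cos\gamma$ is the $(1,3)$ cofactor of $S_T$, whereas the $(2,3)$ cofactor is $\cos\alpha\cos\gamma-\cos\beta=\sin\alpha\sin\gamma$.
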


\begin{proof}
For the vertices in cyclic order, take planar unit vectors
$w_1,w_2,w_3$ at angles $0,\phi_k,\phi_k+\phi_\ell$.
Their \textbf{Gram matrix}, the matrix $(w_a\cdot w_b)_{a,b=1}^3$ of pairwise
inner products, is $S_T$, and
$\sin\phi_\ell w_1+\sin\phi_r w_2+\sin\phi_k w_3=0$:
the vertical coordinate is
$\sin\phi_k(\sin\phi_r+\sin(\phi_k+\phi_\ell))=0$,
and the horizontal coordinate is
$\sin\phi_\ell+\sin(\phi_k+\phi_r)=0$, using
$\phi_k+\phi_\ell+\phi_r=2\pi$.
Thus (3.3) is the unique nonnegative kernel ray of the rank-two
positive semidefinite matrix $S_T$.
\end{proof}

In the seven-vertex example, (3.3) gives
$u_0=u_4=\sin(5\pi/9)$ and $u_2=\sin(8\pi/9)$, with all other
coordinates zero.

We now prove that $S$ is copositive and has no other zeros. An \textbf{open
semicircle} means an open arc of angular length $\pi$ on the unit
circle. The next lemma concerns the regular $n$-gon defined above.

\begin{lemma}[four-vertex geometry]
A vertex set containing no
surrounding triple lies in an open semicircle, hence in a cyclic interval
of at most $m$ steps. A four-vertex set containing a surrounding triple
contains exactly two such triples. They share a pair $\{i,j\}$.

\end{lemma}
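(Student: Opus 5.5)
The plan is to work purely combinatorially on $\mathbb Z/n\mathbb Z$ with $n=2m+1$, translating ``surrounding'' into a statement about gaps. A triple is surrounding exactly when every cyclic gap is at most $m$. Equivalently, no gap is at least $m+1$, that is, the three vertices do not all lie in a cyclic interval of at most $m$ steps (the complement of a gap of $\ge m+1$ steps). Since $n$ is odd, no vertex is antipodal to another. So a set of vertices lies in an open semicircle exactly when it lies in a cyclic interval of at most $m$ steps: such an interval spans angle $2\pi m/n<\pi$, while an interval of $m+1$ steps spans more than $\pi$.

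For the first claim, let $V$ be a vertex set with no surrounding triple; we may assume $|V|\ge3$, since sets of size at most $2$ trivially lie in an interval of $\le m$ steps because $d(i,j)\le m$. List $V$ cyclically and look at the largest cyclic gap $g$ between consecutive elements of $V$. If $g\ge m+1$, then $V$ lies in the complementary interval of $n-g\le m$ steps, and we are done. Otherwise every consecutive gap is at most $m$. In that case we build a surrounding triple as follows.
\begin{itemize}
\item Pick $v\in V$ and let $w$ be the last element of $V$ reached from $v$ going forward within $m$ steps.
\item Let $x$ be the next element of $V$ after $w$. Then $x$ lies more than $m$ steps ahead of $v$, but at most $m$ steps after $w$.
\item The triple $\{v,w,x\}$ has gaps $d_1=w-v\le m$ and $d_2=x-w\le m$, and the remaining gap is $n-(x-v)\le n-(m+1)=m$.
\end{itemize}
So this triple is surrounding, a contradiction. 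One has to check that $x\neq v$, which I would handle by noting that the consecutive gaps are $\le m$ and, as $|V|\ge 3$, some element lies strictly beyond $v+m$. I expect this case analysis to be the only delicate point, but it is short.

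For the four-vertex claim, take $\{v_1,v_2,v_3,v_4\}$ in cyclic order with gaps $g_1,g_2,g_3,g_4$ summing to $n$. Each of the four triples omits one vertex $v_i$, and that triple's gaps are the other gaps with $g_{i-1}+g_i$ merged. Since the individual $g_j$ are at most their merged sums, triple $i$ is surrounding iff $g_{i-1}+g_i\le m$ and all other $g_j\le m$. Two facts now drive the count.
\begin{itemize}
\item Opposite sums satisfy $(g_1+g_2)+(g_3+g_4)=n=2m+1$, so exactly one of each opposite pair of adjacent sums is $\le m$.
\item Consequently some adjacent sum is $\le m$. Each $g_j$ is then at most $m$, because if some $g_j\ge m+1$, every adjacent sum avoiding $g_j$ is part of the complement $n-g_j\le m$. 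Then the relevant sums not containing $g_j$ are $\le m$, which would make a triple surrounding only if all gaps are $\le m$, giving a contradiction.
\end{itemize}
So, given at least one surrounding triple, all $g_j\le m$. Triple $i$ is then surrounding iff $g_{i-1}+g_i\le m$. Among the pairs $\{g_1+g_2,\,g_3+g_4\}$ and $\{g_2+g_3,\,g_4+g_1\}$ exactly one of each is $\le m$, giving exactly two surrounding triples.

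The two surrounding triples omit vertices $v_i,v_j$ with $i\ne j$. Two distinct $3$-subsets of a $4$-set always share exactly two elements, namely the complementary pair $\{v_1,\dots,v_4\}\setminus\{v_i,v_j\}$, which is the shared pair $\{i,j\}$ in the statement. This finishes the plan.
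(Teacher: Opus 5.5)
Your proof is correct. For the four-vertex claim you follow the paper's argument: gaps $g_1,\dots,g_4$, merged adjacent sums, and opposite sums adding to $2m+1$. Your second bullet takes a longer path than needed. Your own criterion already shows that any surrounding triple forces every $g_j\le m$, because each $g_j$ appears in that triple's gap list either alone or inside the merged sum. This is the paper's one-line remark that no triple is surrounding once some $g_i>m$.

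For the first claim you take a genuinely different route. The paper argues geometrically:
\begin{enumerate}
\item a triple is surrounding exactly when its convex hull contains the centre;
\item planar Carath\'eodory plus the absence of antipodal vertices shows that a set with no surrounding triple has convex hull missing the origin;
\item a separating line then puts the set in an open semicircle, and hence in an interval of at most $m$ steps.
\end{enumerate}
You stay on $\mathbb Z/n\mathbb Z$ instead. Either some consecutive gap of $V$ is at least $m+1$, and the complementary interval has at most $m$ steps. Or all consecutive gaps are at most $m$, and your greedy choice of $v,w,x$ produces a surrounding triple. The semicircle statement then follows from the interval statement, rather than the reverse.

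Two small points in your nondegeneracy check. The fact $x\ne v$ comes from the wrap-around gap being at most $m$, which puts the last element of $V$ at offset at least $m+1$ from $v$; $|V|\ge3$ is not what matters there. Likewise $w\ne v$ holds because the first gap after $v$ is at most $m$, which you use implicitly.

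As for what each approach buys: the paper's argument gives surrounding triples a clean geometric meaning, but it leans on Carath\'eodory, on separation, and on the stated-but-unproved equivalence between enclosing the centre and having all gaps at most $m$. Yours is fully elementary and self-contained. It also yields directly the interval form, which is what Proposition 3.4 actually uses.
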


\begin{proof}
A surrounding triple is precisely a triple of polygon vertices
whose convex hull contains the origin: three points on a circle enclose
its center exactly when they do not fit in an open semicircle, equivalently
when none of their cyclic gaps exceeds $m$. If the origin were in the convex
hull of a vertex set, planar Carath\'eodory's theorem would put it in the
convex hull of at most three of those vertices. One vertex cannot
contain the origin, and a segment between two vertices could contain
it only if they were antipodal, impossible for odd $n$. Hence a set
with no surrounding triple has a convex hull omitting the origin.
A linear functional strictly positive on this convex hull places all
its vertices in an open half-plane through the origin, hence in an open
semicircle spanning at most $m$ cyclic steps.

For four vertices let $g_1,g_2,g_3,g_4$ be their cyclic gaps, summing
to $2m+1$. If any $g_i>m$, no triple is surrounding. Otherwise the
triple obtained by omitting a vertex is surrounding exactly when the
two gaps merged by that omission sum to at most $m$. The sums
$g_1+g_2$ and $g_3+g_4$ add to $2m+1$, so exactly one is at most $m$;
the same holds for $g_2+g_3$ and $g_4+g_1$. Hence exactly two triples
are surrounding. They share a pair.
\end{proof}

\begin{lemma}[overlapping triangle zeros]
If $u,v$ represent the rays of the two
surrounding triples in a four-set, then $u^{\mathsf T}Sv>0$.

\end{lemma}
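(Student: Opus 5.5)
The plan is to reduce the bilinear form $u^{\mathsf T}Sv$ to a single coordinate of $Su$, and then to evaluate that coordinate with the planar picture from the proof of Lemma~3.1 (triangle zeros).

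\textbf{Step 1: identify the two triples.} Label the four vertices $A,B,C,D$ in cyclic order, with gaps $g_1$ from $A$ to $B$, $g_2$ from $B$ to $C$, $g_3$ from $C$ to $D$ and $g_4$ from $D$ to $A$. Each $g_i\le m$, since some triple is surrounding. By the proof of Lemma~3.3 (four-vertex geometry), exactly one of $g_1+g_2$, $g_3+g_4$ is at most $m$, and exactly one of $g_2+g_3$, $g_4+g_1$ is. Up to rotating and reflecting the labels, take $g_1+g_2\le m$ and $g_2+g_3\le m$. The surrounding triples are then $T=\{A,C,D\}$ (omit $B$) and $T'=\{A,B,D\}$ (omit $C$), and they share the pair $\{A,D\}$. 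Let $u$ be the ray of $T$ and $v$ the ray of $T'$.

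\textbf{Step 2: reduce to one coordinate.} By Lemma~3.1, $u$ restricted to $T$ spans the kernel of $S_T$, so $(Su)_p=0$ for every $p\in T$. Since $v$ is supported on $T'=\{A,B,D\}$ and $A,D\in T$, we get
\[
u^{\mathsf T}Sv=v_B\,(Su)_B .
\]
Here $v_B>0$, so it suffices to prove $(Su)_B>0$.

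\textbf{Step 3: planar model.} The gaps of $T$ in cyclic order are $g_1+g_2$, $g_3$, $g_4$. As in Lemma~3.1, place unit vectors $w_A,w_C,w_D$ at angles $0$, $\phi_{g_1+g_2}$ and $\phi_{g_1+g_2}+\phi_{g_3}$. Their Gram matrix is $S_T$, and $u_Aw_A+u_Cw_C+u_Dw_D=0$. Add a unit vector $w_B$ at angle $\phi_{g_1}$, and compare its inner products with the entries of row $B$ of $S$.
\begin{itemize}
\item $w_B\cdot w_A=\cos\phi_{g_1}=S_{BA}$, which is exact.
\item The angle between $w_B$ and $w_D$ is $\phi_{g_1+g_2}+\phi_{g_3}-\phi_{g_1}=a(g_2+g_3)-t=\phi_{g_2+g_3}$. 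Since $d(B,D)=g_2+g_3\le m$, this equals $S_{BD}$ exactly.
\item The angle between $w_B$ and $w_C$ is $\phi_{g_1+g_2}-\phi_{g_1}=ag_2$, whereas $S_{BC}=\cos\phi_{g_2}=\cos(ag_2-t)$.
\end{itemize}
Combining these,
\[
(Su)_B=w_B\cdot\Bigl(\sum_{p\in T}u_pw_p\Bigr)+u_C\bigl(\cos(ag_2-t)-\cos(ag_2)\bigr)=u_C\bigl(\cos(ag_2-t)-\cos(ag_2)\bigr).
\]

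\textbf{Step 4: the sign (main obstacle).} We need $\cos(ag_2-t)>\cos(ag_2)$, with $u_C=\sin\phi_{g_4}>0$. Since $\cos$ is strictly decreasing on $[0,\pi]$, it suffices that $0<ag_2-t<ag_2\le\pi$. The left inequality is $\phi_{g_2}>0$, which holds. For the right end, $g_2\le m-1$ because $g_1\ge1$ and $g_1+g_2\le m$, so $ag_2\le a(m-1)=\phi_{m-1}+t$. It remains to check $\phi_{m-1}+t\le\pi$ from (3.1): we have $\phi_m<\pi$ and $\phi_m=\phi_{m-1}+a$, and $a>t$ because $t<2\pi/(2n-3)$ makes $2\pi+3t>nt$. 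The only real care is confirming that the relabeling in Step~1 loses no generality.
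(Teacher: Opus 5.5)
Your proof is correct and follows essentially the paper's route: a planar model of the four vertices that agrees with $S$ at every pair except the pair of the two non-shared vertices, giving $u^{\mathsf T}Sv=u_Cv_B[\cos(ag_2-t)-\cos(ag_2)]$, which is positive because $0<ag_2-t<ag_2\le a(m-1)<\pi$. The only difference is organizational: you use $(Su)_p=0$ on $\operatorname{supp}(u)$ to reduce to the single row $B$, whereas the paper builds the full $4\times4$ Gram matrix $G$ and uses $u^{\mathsf T}Gv=0$ (also, the four-vertex geometry lemma you cite is Lemma~3.2, not 3.3).
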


\begin{proof}
Because $S$ depends only on cyclic distance, rotations and
reflections preserve it. Relabel the common pair as $\{0,k\}$ with
$1\le k\le m$. A third vertex of a surrounding triple through this
pair lies in $\{m+1,\ldots,m+k\}$. Write the third vertices of
$u,v$ as $p,q$, respectively, and set
$d=d(p,q)=|p-q|$, so $1\le d\le m-1$.
Since $1\le p-k,n-p\le m$, we have
$ap-2t=\phi_k+\phi_{p-k}=2\pi-\phi_{n-p}$, and likewise for $q$.
Choose planar unit vectors at angles
$0,\phi_k,ap-2t,aq-2t$, indexed by $0,k,p,q$, and let $G$ be their
Gram matrix. Since $an=2\pi+3t$, its entries agree with $S$ on each
surrounding triple, while $G_{pq}=\cos(ad)$. Each of $u,v$ is a
dependence among its triple's planar vectors, so $u^{\mathsf T}Gv=0$.
The only entry at which $S$ and $G$ can differ in $u^{\mathsf T}Sv$
is the pair $\{p,q\}$:

\begin{equation}
u^{\mathsf T}Sv=u_pv_q[\cos(ad-t)-\cos(ad)]
=2u_pv_q\sin(ad-t/2)\sin(t/2)>0.
\end{equation}

Here $ad-t/2>0$ follows from $a>t$, and $ad<\pi$ follows from
$d\le m-1$ and $t<2\pi/(n-3)$, a weaker consequence of (3.1).
\end{proof}

\begin{proposition}[complete zero list]
The matrix $S$ is copositive,
and its nonnegative zero rays are exactly those in Lemma 3.1.

\end{proposition}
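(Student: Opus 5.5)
We prove both claims together by induction on the support size $s=|\operatorname{supp}(x)|$ of a vector $x\in\mathbb R_+^n$. The statement is: $x^{\mathsf T}Sx\ge0$, and equality with $x\ne0$ forces $\operatorname{supp}(x)$ to be a surrounding triple and $x$ to be the multiple of (3.3) given by Lemma 3.1.

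\textbf{Small supports and supports without surrounding triples.} For $s=1$ the form is $x_i^2>0$. Let $I=\operatorname{supp}(x)$ contain no surrounding triple. By Lemma 3.3 it lies in a cyclic interval of at most $m$ steps; after rotation and reflection, $I\subseteq\{0,\ldots,m\}$. For $i<j$ in this range, $S_{ij}=\cos(a(j-i)-t)$. This is the Gram matrix of the unit vectors at angles $ai$, except that every off-diagonal angle is shortened by $t$. I would show $S_I$ is positive definite directly. Write $S_{ij}=\cos(\theta_i-\theta_j)+[\cos(\phi_{j-i})-\cos(a(j-i))]$, where the bracket equals $2\sin(a(j-i)-t/2)\sin(t/2)>0$ since $a m<\pi+$ small. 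Then $x^{\mathsf T}Sx$ is a PSD term plus a term with positive off-diagonal coefficients on a positive vector, hence $>0$ when $s\ge2$. We need $a(j-i)-t/2\in(0,\pi)$ for $j-i\le m$. This follows from (3.1): $am-t/2=\pi-\pi/n+(n-3)t/(2n)+\ldots$, and the sign is checked exactly as in Lemma 3.4. The case $s=2$ is covered here, since two vertices never form a surrounding triple.

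\textbf{Supports containing a surrounding triple.} Here I would use the standard minimal-zero / boundary argument. Minimize $x^{\mathsf T}Sx$ on the simplex restricted to the face $\operatorname{supp}\subseteq I$. If the minimum is attained on a proper face, induction applies. So suppose the minimizer $x$ has full support $I$ with value $\le0$. The KKT conditions then give $S_Ix=\lambda\mathbf 1$ with $\lambda=x^{\mathsf T}Sx\le0$. For $|I|=3$, $S_I$ is PSD of rank two with nonnegative kernel ray (3.3) by Lemma 3.1. So the value is $0$, attained only on that ray, which gives copositivity and the zero list on triples.

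For $|I|\ge4$, pick a four-subset $J\subseteq I$ containing a surrounding triple. By Lemma 3.3 it contains exactly two, with zeros $u,v$ sharing a pair, and Lemma 3.4 gives $u^{\mathsf T}Sv>0$. The plan is a perturbation: subtract a multiple of $u$ from $x$ to kill one coordinate of $T_u$ while staying nonnegative. Since $Su$ vanishes on $T_u$ (by Lemma 3.1 and the PSD triple), the form changes by $-2c\,u^{\mathsf T}Sx+c^2u^{\mathsf T}Su=-2c\lambda\,\mathbf 1^{\mathsf T}u\ge0$. So a vector with smaller support has value $\le x^{\mathsf T}Sx$. Induction gives value $\ge0$, hence $\lambda=0$ and $x^{\mathsf T}Sx=0$. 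Equality must then hold throughout, so the reduced vector is a zero of smaller support. By induction it is a positive multiple of a triangle zero $w$ (or is $0$), and $x=cu+c'w$ with $c,c'>0$.

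\textbf{The main obstacle} is excluding $x=cu+c'w$ with $|I|\ge4$. Expanding, $x^{\mathsf T}Sx=2cc'u^{\mathsf T}Sw$, so it suffices that distinct triangle zeros satisfy $u^{\mathsf T}Sw\ge0$, with strict inequality when their supports differ. For supports inside a common four-set this is Lemma 3.4. In general, $u^{\mathsf T}Sw=\sum_{i\in T_u}u_i(Sw)_i$, so I would show $(Sw)_i>0$ for every $i\notin T_w$. The quantity $(Sw)_i$ is $x'^{\mathsf T}Sx'$-type data for the four-set $T_w\cup\{i\}$. Applying the already-proved four-vertex case (strict positivity off the zero rays, from the argument above with $|I|=4$ done first) and differentiating in the new coordinate gives $(Sw)_i\ge0$, and strictness comes from the zero rays in that four-set being only triangles. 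Carefully ordering this induction, four-sets first with Lemma 3.4 supplying strictness, is the delicate step.
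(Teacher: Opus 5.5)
Your treatment of supports with no surrounding triple, and of triple supports, matches the paper. The step for $|I|\ge4$, however, fails on a sign. With $S_Ix=\lambda\mathbf 1$ and $\lambda=x^{\mathsf T}Sx\le0$, you correctly compute
\[
(x-cu)^{\mathsf T}S(x-cu)-x^{\mathsf T}Sx=-2c\lambda\,\mathbf 1^{\mathsf T}u\ge0 .
\]
So removing the zero $u$ \emph{raises} the value, and your next sentence ("a vector with smaller support has value $\le x^{\mathsf T}Sx$") reverses this. Explicitly, $(x-cu)^{\mathsf T}S(x-cu)=\lambda(1-2c\,\mathbf 1^{\mathsf T}u)$. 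For $\lambda<0$ this is nonnegative as soon as $2c\,\mathbf 1^{\mathsf T}u\ge1$, for instance when $x$ is close to the normalized $u$, and nothing in your argument excludes that. So the inductive bound on the smaller support does not contradict $\lambda<0$. Consequently neither $\lambda=0$ nor the decomposition $x=cu+c'w$ is established. Your final paragraph then plans to obtain $(Sw)_i\ge0$ from the four-vertex case of this same step, so it inherits the gap.

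What is missing is a way to use the overlap lemma (Lemma 3.3 in the paper, your 3.4) at the minimizer itself. The paper does this at second order. It normalizes $u,v$ to coordinate sum one, so $y=u-v$ is tangent to the face and $y^{\mathsf T}Sy=-2u^{\mathsf T}Sv<0$. Since $x$ is positive on $I$, both $x\pm\epsilon y$ stay feasible, and one of them has smaller value, contradicting minimality.

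Your first-order idea also works once you notice that Lemma 3.3 gives $(Su)_i>0$ directly for every $i\notin T_u$. Let $v$ be the zero of the other surrounding triple in $T_u\cup\{i\}$, given by Lemma 3.2. Then $Su$ vanishes on the shared pair $T_u\cap T_v$, so $u^{\mathsf T}Sv=v_i(Su)_i$. At a minimizer with support $I\supsetneq T_u$, the KKT condition then gives
\[
\lambda\,\mathbf 1^{\mathsf T}u=x^{\mathsf T}Su=\sum_{i\in I\setminus T_u}x_i(Su)_i>0 .
\]
This yields copositivity and the complete zero list at once, with no induction on support size.
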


\begin{proof}
Minimize $x^{\mathsf T}Sx$ on the simplex
$\Delta=\{x\ge0:\sum_i x_i=1\}$, and let $I$ be the support of a
minimizer. If $I$ contains no surrounding triple, Lemma 3.2 places it
in a cyclic interval $\{s,s+1,\ldots,s+L\}$ modulo $n$, with $L\le m$.
For each $i\in I$, choose the unique $r_i\in\{0,\ldots,L\}$ with
$i\equiv s+r_i\pmod n$, and let
$G_{ij}=\cos(a(r_i-r_j))$, a planar Gram matrix. For $i\ne j$ in
the interval, $d=|r_i-r_j|\le m$ and

\begin{equation}
S_{ij}-G_{ij}=2\sin(ad-t/2)\sin(t/2)>0.
\end{equation}

Indeed $a>t$ and
$am-t/2=\pi-\pi/n+(2n-3)t/(2n)<\pi$. The diagonals agree.
Since $G$ is positive semidefinite, the off-diagonal correction gives
$x^{\mathsf T}Sx>0$ for supports of size at least two;
singleton supports have value one.

If $I$ is a surrounding triple, $S_I$ is positive semidefinite by
Lemma 3.1. If $I$ contains a surrounding triple and at least one more
vertex, choose a four-set inside $I$ containing it. Let $u,v$ be its two
triangle rays, each normalized to have coordinate sum one. Then
$y=u-v$ has coordinate sum zero, and Lemma 3.3 gives
$y^{\mathsf T}Sy=-2u^{\mathsf T}Sv<0$.
Both $x+\epsilon y$ and $x-\epsilon y$ remain in $\Delta$ for small
$\epsilon>0$, but the average of their quadratic values is
$x^{\mathsf T}Sx+\epsilon^2 y^{\mathsf T}Sy<x^{\mathsf T}Sx$,
contradicting minimality. Hence the minimum is nonnegative and $S$ is
copositive.

A normalized zero minimizes on $\Delta$, so the cases force its
support to be a surrounding triple; $S_I$ then forces (3.3).
\end{proof}

\section{The odd-order face dimension}

Index the rows of $M$ and $H$ by surrounding triples $T$. Index the
columns of the zero--one matrix $M$ by unordered pairs
$e=\{i,j\}$ with $0\le i<j<n$, and set
$M_{T,e}=1$ if $e\subset T$ and $M_{T,e}=0$ otherwise. For each $T$,
let $u^{(T)}$ be the representative in (3.3), with
$u_i^{(T)}=0$ for $i\notin T$. Index the columns of $H$ by
$(i,j)$ with $0\le i\le j<n$ and define

\[
H_{T,(i,j)}=u_i^{(T)}u_j^{(T)}\qquad(0\le i\le j<n).
\]

Thus $(i,i)$ labels a diagonal coordinate, while for $i<j$ the column
$(i,j)$ of $H$ corresponds to the column $\{i,j\}$ of $M$.
The rank of $H$ is the dimension $h$ in (2.2).

\begin{lemma}[rank transfer]
$\operatorname{rank}H=\operatorname{rank}M$.

\end{lemma}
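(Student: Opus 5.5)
The plan is to reduce $H$ to $M$ in two moves: first discard the diagonal columns $(i,i)$ as linear combinations of the off-diagonal ones, then show that the remaining off-diagonal block of $H$ is obtained from $M$ by multiplying rows and columns by positive scalars.

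\emph{Step 1: the diagonal columns are redundant.} For each surrounding triple $T$, Lemma 3.1 shows that $u^{(T)}$ spans the kernel of the positive semidefinite matrix $S_T$. So for each $i\in T$ we have
\[
u_i^{(T)}+\sum_{j\in T,\,j\ne i}S_{ij}u_j^{(T)}=0 .
\]
Multiplying by $u_i^{(T)}$ gives $(u_i^{(T)})^2=-\sum_{j\ne i}S_{ij}u_i^{(T)}u_j^{(T)}$. The sum may be taken over all $j\ne i$, because the terms with $j\notin T$ vanish. If $i\notin T$, both sides are zero.

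Hence, row by row, and with coefficients $S_{ij}$ independent of $T$, the column $(i,i)$ of $H$ equals $-\sum_{j\ne i}S_{ij}$ times the column indexed by the pair $\{i,j\}$. It follows that $\operatorname{rank}H=\operatorname{rank}H'$, where $H'$ is the submatrix of $H$ on the columns $(i,j)$ with $i<j$.

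\emph{Step 2: $H'$ is a positive rescaling of $M$.} Both matrices have the same zero pattern, since $H'_{T,\{i,j\}}\neq 0$ exactly when $\{i,j\}\subset T$. All $u^{(T)}$ entries are positive because $0<\phi_k<\pi$.

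Fix $T$ with gaps $k,\ell,r$ and put $P_T=\sin\phi_k\sin\phi_\ell\sin\phi_r>0$. By (3.3), $u_{v_1}u_{v_2}=\sin\phi_\ell\sin\phi_r=P_T/\sin\phi_k$. Here $k$ is exactly the cyclic distance $d(v_1,v_2)$, because $T$ is surrounding. The same holds cyclically for the other two pairs. Thus for every $e\subset T$,
\[
H'_{T,e}=P_T\cdot\frac1{\sin\phi_{d(e)}}\cdot M_{T,e},
\]
and this also holds trivially when $e\not\subset T$. So $H'=D_1MD_2$ with $D_1=\operatorname{diag}(P_T)$ and $D_2=\operatorname{diag}(1/\sin\phi_{d(e)})$, both invertible diagonal matrices. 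Therefore $\operatorname{rank}H'=\operatorname{rank}M$.

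The step most likely to be delicate is Step 2. One has to notice that each product $u_iu_j$ depends on $T$ only through the factor $P_T$ and otherwise only on the pair $e$. This in turn relies on the surrounding condition, which guarantees that each gap equals the cyclic distance of its endpoint pair. Step 1 is routine once it is recognized that the kernel relation of Lemma 3.1 holds coordinatewise with $T$-independent coefficients.
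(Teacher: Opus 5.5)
Your proof is correct and follows the paper's argument. The off-diagonal block is $\operatorname{diag}(P_T)\,M\,\operatorname{diag}(1/\sin\phi_{d(e)})$, with your $P_T$ equal to the paper's $g_T$, and each diagonal column is removed using $u_i^2=-\sum_{j\ne i}S_{ij}u_iu_j$. The only difference is that you get $(Su^{(T)})_i=0$ directly from $S_Tu^{(T)}=0$ in Lemma 3.1, while the paper derives it from first-order optimality of a zero of a copositive matrix; both are valid here.
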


\begin{proof}
For a triple $T=\{i,j,k\}$, let
$g_T=\sin\phi_{d(i,j)}\sin\phi_{d(i,k)}\sin\phi_{d(j,k)}>0$.
For a pair $e=\{a,b\}$ write $d(e)=d(a,b)$, and let $H_{\mathrm{off}}$
be the block of $H$ with only off-diagonal columns. If $e\subset T$,
equation (3.3) gives $u_au_b\sin\phi_{d(e)}=g_T$. Thus

\begin{equation}
H_{\mathrm{off}}=\operatorname{diag}(g_T)\,M\,
\operatorname{diag}\bigl(1/\sin\phi_{d(e)}\bigr),
\end{equation}

Both diagonal factors are invertible, so this block has rank
$\operatorname{rank}M$. For every zero $u$ and vertex $i$ with
$u_i>0$, varying that coordinate slightly in both directions preserves
nonnegativity. Copositivity and $u^{\mathsf T}Su=0$ therefore give
$(Su)_i=0$. If $u_i=0$, the following equation is trivial. Hence

\begin{equation}
u_i^2=-\sum_{j\ne i}S_{ij}u_iu_j.
\end{equation}

Each diagonal column of $H$ is a fixed linear combination of its
off-diagonal columns. It adds no rank.
\end{proof}

\begin{lemma}[cyclic incidence rank]
For every odd $n=2m+1\ge5$,

\begin{equation}
\operatorname{rank}M=\binom n2-n=\frac{n(n-3)}2.
\end{equation}

\end{lemma}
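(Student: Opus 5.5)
The plan is to compute the rank through the left kernel of the incidence relation. Let $W\subset\mathbb R^{\binom n2}$ be the space of pair-weights $x=(x_e)$ with
\[
x_{\{v_1,v_2\}}+x_{\{v_2,v_3\}}+x_{\{v_1,v_3\}}=0\quad\text{for every surrounding triple }\{v_1,v_2,v_3\}.
\]
This is exactly $\ker M$ acting on column vectors, so $\operatorname{rank}M=\binom n2-\dim W$. The statement is therefore equivalent to $\dim W=n$. I would prove $\dim W\ge n$ and $\dim W\le n$ separately.

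\textbf{Lower bound $\dim W\ge n$.} Since $M$ is invariant under the cyclic shift $i\mapsto i+1$, I will decompose $W\otimes\mathbb C$ into eigenspaces of this shift. Fix a frequency $s\in\mathbb Z/n\mathbb Z$ with $\zeta=e^{2\pi i/n}$. I look for kernel vectors of the form $x_{\{i,i+d\}}=\zeta^{s i}\alpha_s(d)$, where $d$ is read in the direction in which $d\le m$. Imposing the triangle equations on a surrounding triple with gaps $(k,\ell,r)$ and first vertex $v$ gives a relation among $\alpha_s(k)$, $\alpha_s(\ell)$ and $\alpha_s(r)$ with phase factors $\zeta^{sk}$ and $\zeta^{s(k+\ell)}$. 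For $s=0$ this reduces to $g(k)+g(\ell)+g(r)=0$ whenever $k+\ell+r=n$, which $g(d)=d-n/3$ solves. For $s\ne0$ I would try the analogous affine-in-phase ansatz, namely $\alpha_s(d)$ a combination of $1$ and $\zeta^{sd}$ chosen so that the three contributions cancel exactly when $k+\ell+r=n$. Concretely, I expect vectors like $x_{\{i,j\}}=\zeta^{s i}+\zeta^{s j}-c_s\,\zeta^{s(i+j)\cdot\bar2}$, with $\bar2$ the inverse of $2$ mod $n$, or a similar explicit solution. Taking one nonzero solution per frequency gives $n$ vectors in distinct shift-eigenspaces. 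They are automatically independent, and their real and imaginary parts yield $\dim W\ge n$.

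\textbf{Upper bound $\dim W\le n$.} I would show that every $x\in W$ is determined by its values on the $n$ pairs at cyclic distance $m$. These pairs form a single $n$-cycle because $\gcd(m,n)=1$. The relevant equations come from triangles with gaps $(k,m,m+1-k)$ and $(k,\ell,r)$ with one gap equal to $m$. Each such triangle links one distance-$m$ pair with two pairs of distances summing to $m+1$. I then propagate by induction on distance, pairing distance $d$ with distance $m+1-d$ and using a second family of triangles, for example gaps $(d,d,n-2d)$ when $n-2d\le m$, to separate the two unknowns. The main obstacle is showing that this linear elimination really closes up, i.e.\ that the $2\times2$ (or small cyclic) systems obtained at each step are nonsingular for all odd $n\ge5$. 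If direct elimination becomes messy, the fallback is to do the upper bound frequency by frequency as well. In each shift-eigenspace the equations become a small linear system in the unknowns $\alpha_s(1),\dots,\alpha_s(m)$, indexed by the surrounding gap multisets. I would then show its solution space is at most one-dimensional, via a triangular structure in the equations with gaps $(1,m,m)$, $(2,m-1,m)$, $(2,m,m-1)$, and so on.

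Combining the two bounds gives $\dim W=n$, and hence $\operatorname{rank}M=\binom n2-n=n(n-3)/2$.
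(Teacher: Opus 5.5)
Your framework is the paper's: split $\ker M\otimes\mathbb C$ into shift modes $x_{i,k}=z^if_k$ and show that each of the $n$ modes is exactly one-dimensional. The genuine gap is the upper bound, which is the substantive half of the lemma. Your primary route stops at the obstacle you name, and the fallback's ``triangular structure'' is asserted rather than exhibited.

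The missing step is a single subtraction. For $z\ne1$, take the two orderings $(k,m+1-k,m)$ and $(k,m,m+1-k)$ of the same gap multiset; they share their first edge. The $f_k$ terms cancel, and you get $(1-z^m)f_{m+1-k}=(1-z^{m+1-k})f_m$. Since $\gcd(m,n)=1$ gives $z^m\ne1$, letting $k$ run over $1,\dots,m$ makes every $f_j$ a fixed multiple of $f_m$. No induction on distance and no auxiliary family such as $(d,d,n-2d)$ is needed.

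In position space this reads $x_{p,j}-x_{p+m,j}=x_{p,m}-x_{p+j,m}$. That is the propagation from distance-$m$ pairs you were looking for, but it only fixes each distance class up to an additive constant. Those constants live in the $z=1$ mode. There the subtraction is vacuous, because the two equations coincide, and your plan gives no argument for this mode. What works is to compare $(k,m+1-k,m)$ with $(k+1,m+1-k,m-1)$ for $1\le k\le m-1$. This yields $f_{k+1}-f_k=f_m-f_{m-1}$, hence $f_k=Ak+B$, and then any single equation forces $nA+3B=0$.

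On the lower bound, the affine ansatz you state first is correct. Take $f_d=1-z^d$, i.e.\ $x_{\{i,i+d\}}=\zeta^{si}-\zeta^{s(i+d)}$ for $1\le d\le m$, oriented along the short arc. Then $f_k+z^kf_\ell+z^{k+\ell}f_r$ telescopes to $1-z^n=0$. Together with $f_d=d-n/3$ at $z=1$, this gives nullity at least $n$; real and complex nullity agree since $M$ is real.

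However, you should drop the symmetric ``concrete'' guess $\zeta^{si}+\zeta^{sj}-c_s\zeta^{s(i+j)\bar 2}$. It gives $f_d=1+z^d-c_sw^d$ with $w=z^{\bar 2}$, so $w^2=z$. To be proportional to the mode's kernel vector $1-w^{2d}$ for $d=1,\dots,m$, a nonzero quadratic in $y=w^d$ would have to vanish at $m\ge3$ distinct points. So this guess already fails at $s=1$ for every $n\ge7$.
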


\begin{proof}
Because $n$ is odd, every unordered pair has a unique form
$\{i,i+k\}$ with $1\le k\le m$. Label it by $(i,k)$ and write
$x_{i,k}$ for its coordinate.
For a surrounding triple with directed cyclic gaps $k,\ell,r\le m$
and $k+\ell+r=n$, the equation $Mx=0$ is

\begin{equation}
x_{i,k}+x_{i+k,\ell}+x_{i+k+\ell,r}=0.
\end{equation}

Over $\mathbb C$, the functions $i\mapsto z^i$ for $z^n=1$ form a
basis of functions on $\mathbb Z/n\mathbb Z$. Thus every array $x$ has
the Fourier expansion

\[
x_{i,k}=\sum_{z^n=1}z^if_k(z),\qquad
f_k(z)=\frac1n\sum_{i=0}^{n-1}z^{-i}x_{i,k}.
\]

Substitution into (4.4) and comparison of the independent functions
$i\mapsto z^i$ show that the complex kernel splits into \textbf{modes}
$x_{i,k}=z^if_k$, one for each $z$. The equations for a mode are

\begin{equation}
f_k+z^kf_\ell+z^{k+\ell}f_r=0
\qquad(k+\ell+r=n;\ 1\le k,\ell,r\le m).
\end{equation}

For $z\ne1$, subtract the equations for $(k,m+1-k,m)$ and
$(k,m,m+1-k)$; the result is (4.6).

\begin{equation}
(1-z^m)f_{m+1-k}=(1-z^{m+1-k})f_m.
\end{equation}

Since $\gcd(m,n)=1$, $z^m\ne1$. As $k$ varies, (4.6) fixes every
$f_j$ as a multiple of $1-z^j$. Conversely these values satisfy
(4.5), whose left side telescopes to $1-z^n=0$. Thus the kernel in
each nontrivial mode has dimension one.

For $z=1$, compare the equations for
$(k,m+1-k,m)$ and $(k+1,m+1-k,m-1)$,
$1\le k\le m-1$. They give
$f_{k+1}-f_k=f_m-f_{m-1}$, so $f_k=Ak+B$. Every equation (4.5) then
reduces to $nA+3B=0$. This mode also has dimension one. There are $n$
values of $z$, so $\dim_{\mathbb C}\ker M=n$. The matrix $M$ is
real, hence its real nullity is $n$ as well. With $\binom n2$ columns,
(4.3) follows.
\end{proof}

By Proposition 3.4, Lemmas 2.1, 4.1, and 4.2, the exposed face
$F(S)\subseteq\operatorname{CP}_n$ has
maximum cp-rank exactly $n(n-3)/2$. This proves the odd case of the
lower bound in Theorem 1.1.

\section{One-coordinate restriction and even order}

Let $N=2m\ge6$, let $n=N+1=2m+1$, and form the odd-order matrix
$S$ in (3.2). Delete coordinate $0$ to obtain an $N\times N$
principal submatrix $S'$. It is copositive because a nonnegative
vector on the surviving coordinates extends by zero. Such a vector is
a zero of $S'$ exactly when its zero extension is a zero of $S$.
Proposition 3.4 therefore says that the zero rays of $S'$ are precisely
the surrounding-triple rays avoiding $0$.

Let $M'$ be the submatrix of $M$ with rows for surrounding triples
avoiding $0$ and columns for pairs of surviving vertices.
The rank transfer in Lemma 4.1 applies to this
sublist: equation (4.1) restricts to its rows and surviving pairs, and
(4.2) uses only surviving coordinates. We calculate $\operatorname{rank}M'$.

\begin{proposition}[restriction rank]
For every $m\ge3$,

\begin{equation}
\operatorname{rank}M'=\binom{2m}{2}-2m-1
=\frac{N(N-3)}2-1.
\end{equation}

\end{proposition}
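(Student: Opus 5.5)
Restriction interpretation. The rank of $M'$ equals $\binom{2m}{2}$ minus the dimension of the space of edge weights $x$ on pairs avoiding $0$ with $Mx=0$ on every surrounding triple avoiding $0$. Equivalently, $\ker M'$ is the space of $x\in\mathbb R^{\binom n2}$ with $x_e=0$ for the $2m$ pairs $e\ni0$ that satisfy only those equations (4.4) whose triples avoid $0$. So the task is to show this kernel has dimension exactly $2m+1=N+1$.

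Lower bound on $\ker M'$. The full kernel $\ker M$ is $n$-dimensional, consisting of the modes found in Lemma 4.2. Restricting these $n$ kernel vectors to the surviving pairs gives vectors in $\ker M'$. I would first check that the restriction map $\ker M\to\ker M'$ is injective. A kernel vector vanishing off the pairs through $0$ still satisfies every triple equation through $0$, and such a triple contains two pairs through $0$ and one other pair, so the weights on pairs through $0$ must vanish as well. This gives $\dim\ker M'\ge n$. I then need exactly one extra dimension. The natural candidate comes from the "star" structure: the equations removed are exactly those of triples containing $0$. Each such triple $\{0,p,q\}$ with $p<q$ is surrounding iff $q-p\le m$, $p\le m$ and $q\ge m+1$. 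I would look for an explicit kernel vector of $M'$ outside the image, for instance one supported on pairs $\{p,q\}$ with $p\le m<q$, $q-p\le m$. These are exactly the third edges of the deleted triangles, and it would cancel in every remaining surrounding triple. A surrounding triple avoiding $0$ contains either zero or exactly two such "crossing" pairs. With signs chosen by a coboundary-type rule (for example, $x_{\{p,q\}}=c_p-c_q$ adjusted), each remaining equation can be made to cancel. I will try to produce this vector explicitly, or count it via the Euler-type relation below.

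Upper bound on $\ker M'$. The harder direction is showing $\dim\ker M'\le n+1$, equivalently that the rows through $0$ add exactly $2m-1$ to the rank of $M$ beyond $M'$. The rows through $0$ number $\binom{m+1}{2}$-ish. One route is to show that $\operatorname{rank}M=\operatorname{rank}M'+(\text{number of pair columns through }0)-1$. For this I would (i) show that the columns through $0$, restricted to rows through $0$, have rank $2m-1$ modulo the row space of $M'$, and (ii) exhibit the one dependency. Alternatively, I would directly solve $M'x=0$ by propagation, imitating the $z=1$ argument of Lemma 4.2: fix the values on a small spanning set of pairs (about $n+1$ of them, e.g. pairs $\{1,j\}$ plus pairs $\{i,i+m\}$) and use chains of surrounding triples avoiding $0$ to determine all other coordinates, as in the differencing $(k,m+1-k,m)$ versus $(k+1,m+1-k,m-1)$.

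Main obstacle. I expect the upper bound via propagation to be the main difficulty, since cyclic symmetry is broken and Fourier modes no longer decouple. I would first try to reduce to Lemma 4.2 by a rank count. Since $\ker M$ injects into $\ker M'$, it suffices to prove $\dim(\ker M'/\ker M)\le1$. Elements of this quotient are obstructed exactly by the deleted equations on triples $\{0,p,q\}$. Given $x\in\ker M'$, I would try to extend $x$ by values $y_p$ on pairs $\{0,p\}$ so that all deleted equations $y_p+y_q+x_{pq}=0$ hold. This is a linear system on the bipartite-like graph of crossing pairs $\{p,q\}$ with $p\le m<q\le p+m$. That graph is connected with a single odd cycle-free structure, so its solvability imposes one cycle condition per independent cycle. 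Then I would use the equations of $M'$ to show that all but one of these cycle conditions are automatic, leaving a single obstruction and hence the bound. The final count $\binom{2m}{2}-(2m+1)$ then gives (5.1); the hypothesis $m\ge3$ should be needed to ensure enough surrounding triples avoid $0$ for the propagation.
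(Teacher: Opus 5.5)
There is a concrete counting error that makes your plan aim at the wrong number. The restriction map $\ker M\to\ker M'$ is \emph{not} injective. The vector with $x_{\{0,j\}}=1$ for $1\le j\le m$, $x_{\{0,k\}}=-1$ for $m+1\le k\le 2m$, and all other coordinates zero lies in $\ker M$, because every surrounding triple through $0$ is $\{0,j,k\}$ with $j\le m<k$; yet it restricts to zero. Your injectivity argument overlooks that $y_p+y_q=0$ on the edges of the connected \emph{bipartite} graph $B$ of crossing pairs has a one-dimensional solution space. So the image of $\ker M$ is only $2m$-dimensional. Your targets ``$\dim\ker M'\ge n$ plus one extra'' and ``$\dim\ker M'\le n+1$'' would therefore give nullity $2m+2$, i.e.\ $\operatorname{rank}M'=\binom{2m}{2}-2m-2$. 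That contradicts both the statement and your own opening target $2m+1$. Likewise, the rows through $0$ raise the rank by $2m$, not $2m-1$.

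Your candidate extra vector also rests on a false claim. A surrounding triple $a<b<c$ avoiding $0$ has $c-a\ge m+1$, so it contains \emph{exactly one} crossing pair, never zero or two. Hence a vector supported on crossing pairs lies in $\ker M'$ only if it vanishes off $e_*=\{m,m+1\}$, the unique crossing pair lying in no such triple. The extra vector is the unit vector $b_{e_*}$, whose column in $M'$ is zero. A vector $c_p-c_q$ on crossing pairs, if it were in $\ker M'$, would extend into $\ker M$ via $y_p=-c_p$, $y_q=c_q$, and so would lie in the image anyway. Showing that $b_{e_*}$ is outside the image requires a cycle of $B$ through $e_*$, such as $m,m+2,m-1,m+1$; this is where $m\ge3$ enters.

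With the count corrected, your extension framework is the kernel-side dual of the paper's computation $\dim(U\cap W)=\dim Z(B-e_*)$: given $x\in\ker M'$, solve $y_p+y_q=-x_{pq}$ on $E(B)$, with one solvability condition per independent cycle. But the decisive step is only announced: you must show that every cycle condition avoiding $e_*$ follows from $M'x=0$. The missing idea is the paper's four-triple identity. For a four-cycle $j<j'\le m<k<k'$ of $B$ avoiding $e_*$, set $\ell=j'+m+1$ if $j'\le m-1$, or $\ell=k-m-1$ if $k\ge m+2$. Then each triple $\{p,q,\ell\}$ with $p\in\{j,j'\}$, $q\in\{k,k'\}$ is surrounding and avoids $0$. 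The alternating sum of these four rows is $b_{jk}-b_{j'k}+b_{j'k'}-b_{jk'}$, so pairing with $x$ gives that four-cycle's condition. You also need the chord argument that reduces every cycle of $B-e_*$ to four-cycles: the largest left vertex $j_*\ne m$ on a cycle is adjacent to all of its right vertices. Without these two ingredients, the upper bound $\dim\ker M'\le 2m+1$, which is the substance of the proposition, is not established.
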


\begin{proof}
Divide the columns of the full odd-order matrix $M$ into
$P_0$, the pairs avoiding $0$, and $P_\bullet$, the pairs through $0$.
Divide its rows into those avoiding $0$, whose $P_0$ block is $M'$,
and the rows $R_0$ of surrounding triples through $0$. Write $U$ for
the row space of $M'$
padded by zeros on $P_\bullet$, and $W$ for the row space of $R_0$.
Since $\operatorname{rowspace}M=U+W$,

\begin{equation}
\operatorname{rank}M'=\operatorname{rank}M-\operatorname{rank}R_0
+\dim(U\cap W).
\end{equation}

A triple $\{0,j,k\}$ with $1\le j<k\le2m$ is surrounding exactly when
$j\le m<k$ and $k-j\le m$. Encode these triples by the edges of a
bipartite graph $B$ with left vertices $1,\ldots,m$ and right
vertices $m+1,\ldots,2m$, where $j$ is adjacent to
$m+1,\ldots,j+m$. Write $E(B)$ for its edge set. The graph is
connected and has

\begin{equation}
|E(B)|=\sum_{j=1}^m j=\frac{m(m+1)}2.
\end{equation}

Each row of $R_0$ contains the $P_0$ coordinate indexed by its pair
$\{j,k\}$, which occurs in no other row of $R_0$. Thus its rows are
independent and $\operatorname{rank}R_0=|E(B)|$.
For a pair $\{a,b\}$, let $b_{ab}$ be its unit coordinate vector in
the space indexed by all unordered pairs. An element of $W$ is
$\sum_{jk\in E(B)}\lambda_{jk}(b_{0j}+b_{0k}+b_{jk})$. Its
$P_\bullet$ coordinates vanish precisely when
$\sum_{f\ni v}\lambda_f=0$ at each vertex $v$ of $B$. Define
$Z(B)=\{\lambda\in\mathbb R^{E(B)}:\sum_{f\ni v}\lambda_f=0
\text{ for every vertex }v\}$, and use the same definition for
subgraphs $G$ of $B$. For an edge weighting $\lambda\in\mathbb R^{E(G)}$,
its support is $\{e\in E(G):\lambda_e\ne0\}$. Orient each edge of $G$
from left to right, and let $D_G$ be its vertex-edge matrix, with
$-1$ at the left endpoint, $+1$ at the right endpoint, and zero
elsewhere. The equations $D_G\lambda=0$ differ from the defining
vertex-sum equations only by signs, so $Z(G)=\ker D_G$ is the real
cycle space of $G$. The map
$\pi(\lambda)=\sum_{jk\in E(B)}\lambda_{jk}b_{jk}$ is injective, so
$\dim(U\cap W)$ equals the dimension of the $\lambda\in Z(B)$ for
which $\pi(\lambda)\in U$.

The edge $e_*=\{m,m+1\}$ belongs to $B$ but its column in $M'$ is
zero. Indeed no surrounding triple avoiding $0$ contains both
$m$ and $m+1$: a third vertex on either complementary arc makes one
cyclic gap exceed $m$. Hence $\pi(\lambda)\in U$ implies
$\lambda_{e_*}=0$. Let $B-e_*$ be the graph obtained by deleting
this edge, and regard $Z(B-e_*)$ as a subspace of $\mathbb R^{E(B)}$
by assigning zero to its $e_*$ coordinate. This gives the upper
inclusion

\begin{equation}
\{\lambda\in Z(B):\pi(\lambda)\in U\}\subseteq Z(B-e_*),
\end{equation}

We prove the reverse inclusion by four-cycles. For any cycle in a
bipartite graph, its \textbf{cycle vector} assigns alternating $+1$ and
$-1$ to successive edges and zero to all other edges. A four-cycle in $B$
has vertices $j<j'\le m<k<k'$ with $k'-j\le m$ and alternating
pair-coordinate vector, the image of its cycle vector under $\pi$,
$v=b_{jk}-b_{j'k}+b_{j'k'}-b_{jk'}$.
It avoids $e_*$ exactly when $j'\le m-1$ or $k\ge m+2$. In the first
case set $\ell=j'+m+1>k'$; in the second set
$\ell=k-m-1<j$. The defining inequalities of $B$ show in either
case that all four triples $\{j,k,\ell\}$,
$\{j',k,\ell\}$, $\{j',k',\ell\}$, and
$\{j,k',\ell\}$ are surrounding and avoid $0$. Indeed, for
$x\in\{j,j'\}$ and $y\in\{k,k'\}$ the cyclic gaps of
$\{x,y,\ell\}$ are, in the first case,
$y-x\le m$, $\ell-y\le m$, $n-\ell+x=m+x-j'\le m$;
in the second case they are $x-\ell\le m$, $y-x\le m$,
$n-y+\ell=m+k-y\le m$. If $T_{xyz}$
denotes the incidence row of $\{x,y,z\}$, cancellation of the pairs through
$\ell$ gives

\begin{equation}
T_{jk\ell}-T_{j'k\ell}+T_{j'k'\ell}-T_{jk'\ell}=v.
\end{equation}

Thus every four-cycle vector of $B-e_*$ maps into $U$.

These four-cycles span $Z(B-e_*)$. For any nonzero $\lambda$ in this
space, a vertex incident with exactly one edge in its support would
violate the vertex-sum equation. Thus every vertex incident with a
support edge has degree at least two there, and the finite support
subgraph contains a cycle. Subtract a multiple of that
cycle's vector to eliminate one support edge, without adding edges to
the support; finitely many repetitions show that cycle vectors span
the space. Now consider a cycle $C$ of length at least six.
Choose its largest left vertex $j_*$ other
than $m$. Every right vertex $k$ on $C$ has a cycle-neighbor
$j_a\ne m$, so $k\le j_a+m\le j_*+m$. Thus $j_*$ is adjacent in
$B-e_*$ to every right vertex of $C$; this edge cannot be $e_*$, since
$j_*\ne m$. There are at least three right vertices but only two
cycle-neighbors of $j_*$, so $C$ has a chord, an edge joining two
nonconsecutive vertices of $C$. It divides $C$ into two shorter cycles.
Choose the signs of their alternating cycle vectors so their chord
entries cancel; their sum is the cycle vector of $C$. Repeating this
split expresses that vector as a sum of four-cycle vectors. Also,
$B-e_*$ remains connected when $m\ge3$: $m$ meets
every right vertex except $m+1$, the latter meets every left vertex
except $m$, and $m,m+1$ are joined by the path
$m,(m+2),(m-1),(m+1)$. For $G=B-e_*$, the equation
$D_G^{\mathsf T}c=0$ makes the vertex labels $c$ equal across every
edge. Connectedness makes $c$ constant, so $\operatorname{rank}D_G=2m-1$
and

\begin{equation}
\dim Z(B-e_*)=\bigl(|E(B)|-1\bigr)-2m+1
=\frac{m(m-3)}2.
\end{equation}

Equations (5.4)--(5.6) show that $\dim(U\cap W)=m(m-3)/2$.
Substitute this, (5.3), and
$\operatorname{rank}M=\binom{2m+1}{2}-(2m+1)$ from Lemma 4.2 into
(5.2). The result is
$\operatorname{rank}M'=2m^2-3m-1$, which is (5.1).
\end{proof}

\begin{proof}[Proof of Theorem 1.1]
The odd lower bound follows after Lemma 4.2.
For even $n=N\ge6$, the rank transfer of Lemma 4.1 applies to the
surviving zero rays, so Proposition 5.1 and Lemma 2.1 give a matrix
in the exposed face $F(S')\subseteq\operatorname{CP}_N$ of cp-rank
$N(N-3)/2-1$. The upper bound is
\cite[Corollary~5.2]{ref2}. Subtracting the two expressions in (1.2)
gives $2n-4+\varepsilon_n$.
\end{proof}

\section*{Acknowledgments}

The proofs presented in this paper were carried out by GPT-5.6-sol,
GPT-6 Astra, Claude Fable 5, and Claude Fable 5.1, under the guidance
of the author. The author has reviewed the resulting proof arguments.
Responsibility for the final text rests with the author.

\end{document}